\newtheorem{theo}{\bf \eh Theorem}[section]
\newtheorem{prop}[theo]{\bf \eh Proposition}
\newtheorem{lema}[theo]{\bf \eh Lemma}
\newtheorem{claim}[theo]{\bf \eh Claim}
\newtheorem{corol}[theo]{\bf \eh Corollary}
\newtheorem{conjecture}[theo]{Conjecture}
\def\R{\mathord{\mathbb R}}
\def\Q{\mathord{\mathbb Q}}
\def\Z{\mathord{\mathbb Z}}
\def\N{\mathord{\mathbb N}}
\def\notmid{\mathrel{\vert\hspace*{-.4em}/}}
\def\eh{\hspace*{\parindent}}
\def\qed{\hfill$\Box$}
\title{Counting Spectral Radii of Matrices with Positive Entries}
\author{J.\ A.\ Dias da Silva and Pedro J.\ Freitas\\
Centro de Estruturas Lineares e Combinat\'oria\\
Departamento de Matem\'atica \\
da Faculdade de Ci\^encias da Universidade de Lisboa\\
\texttt{japsilva@fc.ul.pt\ pedro@ptmat.fc.ul.pt}}
\date{April 1, 2012}
\begin{document}

\maketitle

\begin{abstract}
The sum-product conjecture of Erd\H os and Szemer\'edi states that, given a finite set $A$ of positive numbers, one can find asymptotic lower bounds for $\max\{|A+A|,|A\cdot A|\}$ of the order of $|A|^{1+\delta}$ for every $\delta <1$. In this paper we consider the set of all spectral radii of $n\times n$ matrices with entries in $A$, and find lower bounds for the cardinality of this set. In the case $n=2$, this cardinality is necessarily larger than $\max\{|A+A|,|A\cdot A|\}$. 
\end{abstract}

\section{A conjecture inspired by Erd\H os and Szemer\'edi}

In additive combinatorics, some classical results are estimates of cardinality of the ranges of some classical functions such as the sum or the product, when the domain is resticted to a certain finite set $A\times A$, as a function of $|A|$, the cardinality of $A$.\medskip

Among the best known results in this area, in finite characteristic, are the Cauchy-Davenport theorem and the Erd\H os-Heilbronn conjecture (see \cite{Na} and \cite{SH} for instance), where bounds are established for the cardinality of sumsets in $\Z_p$.\medskip

In characteristic zero, one of such results is the conjecture of Erd\H os and Szemer\'edi that no $A\subseteq \mathbb{N}$ will yield values of $|A+A|$ and $|A\cdot A|$ that are both ``small''---by ``small'' we mean a polynomial of degree 1 in $|A|$. To be more precise, the conjecture of Erd\H os and Szemer\'edi is as follows. We write $a(x)\gg b(x)$ to mean that $a(x)$ is greater than $c\cdot b(x)$ for a certain constant $c$ and for all $x$. 

\begin{conjecture}
Let $A\subset \N$ be a finite set. Then for all $\delta<1$,
$$\max\{|A+A|,|A\cdot A|\}\gg |A|^{1+\delta}.$$
\end{conjecture}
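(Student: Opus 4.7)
This is the Erd\H os--Szemer\'edi sum-product conjecture, which remains open in its full strength; no plan will actually close it. What follows is therefore a sketch of the standard incidence-geometric route (Elekes, Solymosi, Konyagin--Shkredov), together with a frank indication of why the exponent $1+\delta$ for every $\delta<1$ is out of reach.

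The plan is to pass from arithmetic to geometry. First I would set up the Elekes configuration: take the point set $P=(A+A)\times(A\cdot A)$ in the plane, and the family of lines $\ell_{a,b}:y=a(x-b)$, $a,b\in A$. Each such line contains at least $|A|$ points of $P$, namely $\{(b+c,ac):c\in A\}$, so the number of incidences $I(P,\mathcal{L})$ is at least $|A|^3$ while $|P|=|A+A|\cdot|A\cdot A|$ and $|\mathcal{L}|=|A|^2$. Second, I would feed this into the Szemer\'edi--Trotter bound
\[
I(P,\mathcal{L}) \ll (|P|\,|\mathcal{L}|)^{2/3}+|P|+|\mathcal{L}|,
\]
and optimise: comparing $|A|^3$ with $(|A+A|\,|A\cdot A|\,|A|^2)^{2/3}$ yields Elekes' bound $\max\{|A+A|,|A\cdot A|\}\gg|A|^{5/4}$. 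To push the exponent further I would replace the crude incidence count by multiplicative energy estimates and a dyadic pigeonhole on popular products (Solymosi's trick), reaching $|A|^{4/3}$ up to logarithmic factors, and then exploit higher energies of convex sets in the style of Konyagin--Shkredov to shave off a further small constant.

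The hard part is everything beyond $4/3$. The Szemer\'edi--Trotter theorem is tight on grid-like configurations, so any route to exponent $2-o(1)$ must exploit the fact that the extremal configurations for incidences (sections of integer lattices) are precisely the sets with very small sumset, which should in turn force a very large product set---a tension nobody has yet succeeded in quantifying beyond the Solymosi--Shkredov threshold. For this reason I would not expect the incidence approach alone to settle the conjecture, and I note that the present paper sidesteps the difficulty entirely, replacing the cardinalities $|A+A|$ and $|A\cdot A|$ by cardinalities of sets of spectral radii, where the characteristic polynomial provides a direct algebraic identity unavailable in the pure sum-product setting.
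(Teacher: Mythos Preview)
Your assessment is correct: the statement is labelled a \emph{Conjecture} in the paper and is not proved there either. The paper simply states the Erd\H os--Szemer\'edi conjecture, records that the best known exponent is $\delta\geq 1/3$ (citing Solymosi), and uses it only as motivation for the spectral-radius problem. So there is no ``paper's own proof'' to compare against, and your recognition that the full conjecture is open, together with your sketch of the Elekes--Solymosi incidence route and its known limitations, is entirely appropriate.
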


The best proved result so far is $\delta \geq 1/3$, in \cite{So}.\medskip

The aim of this paper is to extend this type of results to functions which are somewhat close to these classical functions. One example (which we will not address in this paper) is the elementary symmetric functions (see for instance \cite{SG1} and \cite{SG2}). Another example is the spectral radius function, defined on the set of matrices with entries in a finite subset of the positive reals. We will call such matrices positive, and denote by $\R^+$ the set of positive reals, by $\R^+_0$ the set of  non-negative reals and, for a set $A$, we denote by $A^{m\times n}$ the set of all $m\times n$ matrices with entries in $A$.\medskip

Let $M\in (\R^+_0)^{n\times n}$, $M\neq 0$. By the Perron-Frobenius theorem, the spectral radius of $M$, which we will denote by $\rho(M)$, is a positive eigenvalue of $M$. For $A\subseteq \R^+$, a finite set, we define
$$\Omega_n(A)=\{\rho(M)\; :\; M\in  A^{n\times n}\},$$
and we denote by $nA$ the set of all sums of $n$ elements of $A$, and by $A^{\times n}$ the set of all products of $n$ elements of $A$ (the first notation taken from \cite{Na}).

\begin{prop}
\label{pr:sp}
We have $nA\subseteq \Omega_n(A)$ and $|A^{\times n}|\leq |\Omega_n(A\cup\{0\})|$. Therefore,
$$|\Omega_n(A)|\geq |nA|,$$
$$|\Omega_n(A\cup\{0\})|\geq\max\{|nA|,|A^{\times n}|\}.$$
For $n=2$, we have
$$|\Omega_2(A)|\geq\max\{|A+A|,|A\cdot A|\}.$$
\end{prop}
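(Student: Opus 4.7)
The strategy is to realize sums and products of elements of $A$ as spectral radii of explicit matrices, and then to verify that each correspondence is injective.

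For $nA\subseteq\Omega_n(A)$, given $a_1,\ldots,a_n\in A$, I would consider the matrix $M\in A^{n\times n}$ with constant columns, $M_{ij}=a_j$. This matrix has rank one, so its sole nonzero eigenvalue equals $\mathrm{tr}(M)=a_1+\cdots+a_n$, which by Perron--Frobenius is $\rho(M)$. For the inequality $|A^{\times n}|\leq|\Omega_n(A\cup\{0\})|$, I would employ a weighted cyclic permutation matrix $N$ with $N_{i,i+1}=a_i$ (indices modulo $n$) and zeros elsewhere. A direct computation gives $N^n=(a_1\cdots a_n)I$, hence $\rho(N)=(a_1\cdots a_n)^{1/n}$; since $t\mapsto t^{1/n}$ is injective on $\R^+$, distinct products yield distinct spectral radii. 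The two intermediate displayed inequalities then follow at once from $\Omega_n(A)\subseteq\Omega_n(A\cup\{0\})$.

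The $n=2$ refinement adds only one new ingredient: the bound $|\Omega_2(A)|\geq|A\cdot A|$ must be obtained with entries strictly in $A$, so the cyclic-permutation trick is unavailable. I would fix any $\epsilon\in A$ and, for each pair $(x,y)\in A\times A$, consider
$$M(x,y)=\begin{pmatrix}\epsilon&x\\y&\epsilon\end{pmatrix}\in A^{2\times 2},$$
whose characteristic polynomial is $(\lambda-\epsilon)^2-xy$. This yields $\rho(M(x,y))=\epsilon+\sqrt{xy}$, a strictly increasing function of $xy$, so the assignment $xy\mapsto\rho(M(x,y))$ injects $A\cdot A$ into $\Omega_2(A)$. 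Combined with $|\Omega_2(A)|\geq|A+A|$ from the first step, this completes the argument.

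I do not expect any genuine obstacle; the only mild subtlety is the $n=2$ product bound, where one must absorb a forbidden zero into the diagonal by inserting a fixed element $\epsilon\in A$, exploiting the fact that the resulting spectral radius still depends monotonically on $xy$.
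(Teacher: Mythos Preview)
Your proposal is correct and follows essentially the same approach as the paper: the rank-one matrix with all rows equal to $[a_1\ \cdots\ a_n]$ for sums, the weighted cyclic permutation matrix for products, and the matrix $\epsilon I_2+\begin{pmatrix}0&x\\y&0\end{pmatrix}$ for the $n=2$ product bound. The only cosmetic difference is that you identify the sum as the trace of a rank-one matrix, whereas the paper exhibits the all-ones eigenvector explicitly.
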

\begin{proof} Let $a_1,\ldots,a_n\in A$. Take the matrix of $A^{n\times n}$ in which all rows are equal to
$$\left[ a_1\ a_2\ \cdots \ a_n \right].$$
Then $a_1+\cdots+a_n$ is an eigenvalue of this matrix, associated with eigenvector $[1\ 1\ \ldots\ 1]^T$. Since the matrix has rank 1, all other eigenvalues are 0, thus $a_1+\cdots+a_n$ is the spectral radius, therefore $nA \subseteq \Omega _n(A) \subseteq \Omega_n(A\cup\{0\})$. \medskip

For the second result, consider the matrix

$$
M=\left[\begin{array}{ccccc}
0&a_1&0&\cdots&0\\
0&0&a_2&\cdots &0\\
\vdots&\vdots&\vdots&\ddots&\vdots\\
0&0&0&\cdots&a_{n-1}\\
a_n&0&0&\cdots&0
\end{array}\right]$$
The eigenvalues of this matrix are the complex $n$-th roots of $a_1 \cdots a_n$, and therefore this matrix has spectral radius $\sqrt[n]{a_1 \cdots a_n}$ (all eigenvalues have this absolute value). Therefore $\{\sqrt[n]{a_1 \cdots a_n}\; :\; a_1,\ldots, a_n\in A\}\subseteq \Omega_n(A\cup\{0\})$ and
$$|\Omega_n(A\cup\{0\})|\geq | \{\sqrt[n]{a_1 \cdots a_n} :\; a_1,\ldots, a_n\in A\}|=|A^{\times n}|.$$  

Thus we conclude that $|\Omega_n(A\cup\{0\})|\geq\max\{|nA|,|A^{\times n}|\}$.\medskip 

In case $n=2$, fix $a\in A$ and for $a_1,a_2\in A$, take the matrix
$$aI_2+\begin{bmatrix}
0 & a_1\\ a_2 & 0
\end{bmatrix}\in A^{2\times 2}$$
with spectral radius $a+\sqrt{a_1a_2}$. The map $a_1a_2\to a+\sqrt{a_1a_2}$, from $A\cdot A$ to $\Omega_2(A)$ is injective, so $|\Omega_2(A)|\geq |A\cdot A|$.\qed
\end{proof}\medskip

The previous result has a clear relationship to the problem of finding lower bounds for both $|A+A|$ and $|A\cdot A|$. The conjecture of Erd\H os and Szemeredi, along with Proposition \ref{pr:sp} leads naturally to the following conjecture. 
\begin{conjecture}
Let $A\subset \N$ be a finite set. Then for all $\delta<1$,
$$|\Omega_2(A)|\gg |A|^{1+\delta}.$$
\end{conjecture}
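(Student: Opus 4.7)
The most direct approach uses Proposition~\ref{pr:sp}, which already gives the inequality $|\Omega_2(A)| \geq \max\{|A+A|, |A\cdot A|\}$. Combined with the unconditional sum--product bound of Solymosi~\cite{So}, this immediately yields the conjecture for every $\delta \leq 1/3$, and a full proof is an immediate consequence of the Erd\H os--Szemer\'edi conjecture itself; thus the present conjecture is no stronger than Erd\H os--Szemer\'edi, though it may well be strictly weaker since the reverse implication is not apparent. The interesting question is therefore an unconditional proof in the range $1/3 < \delta < 1$.

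To push beyond $\delta = 1/3$ unconditionally, the plan is to exploit the four-parameter structure of $\Omega_2(A)$ that is ignored by Proposition~\ref{pr:sp}. The spectral radius of a positive $2\times 2$ matrix with entries $a,b,c,d$ is
\[
\rho = \frac{a+d + \sqrt{(a-d)^2 + 4bc}}{2},
\]
a genuinely hybrid additive--multiplicative function of $(a,b,c,d) \in A^4$. A useful first reduction is the slice $d = a$, which yields $\{a + \sqrt{bc} : a,b,c \in A\} \subseteq \Omega_2(A)$, essentially an additive translate of $\sqrt{A\cdot A}$, already inviting Pl\"unnecke--Ruzsa estimates that couple the additive and multiplicative energies of $A$. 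More ambitiously, one could apply an Elekes--R\'onyai or Raz--Sharir--Solymosi expansion theorem to the bivariate real-algebraic map $(x,y) \mapsto x + \sqrt{y}$ after checking it is not of the degenerate additive or multiplicative form excluded by those results; if applicable, such theorems give superquadratic expansion and hence an exponent strictly larger than $4/3$.

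The principal obstacle is \emph{multiplicity control}: a single real $\rho$ arises as the spectral radius of a three-parameter family of positive $2\times 2$ matrices, so the one-variable injectivity arguments underlying Proposition~\ref{pr:sp} cannot by themselves reach the conjectured exponent. A genuine proof would require a careful incidence-geometric analysis of how often quadruples in $A^4$ collide on the quadric $(2\rho - a - d)^2 = (a-d)^2 + 4bc$, recovering the four entries from $\rho$ up to a manageable multiplicity. Since the conjecture does not obviously imply Erd\H os--Szemer\'edi, there is in principle room for an unconditional proof that stops short of a full sum-product theorem; nevertheless, extracting $\delta$ close to $1$ from the matrix structure alone remains out of reach of current techniques, and this incidence count is where the main difficulty is concentrated.
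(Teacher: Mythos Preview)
The statement under discussion is a \emph{conjecture} in the paper, not a theorem: the paper offers no proof, and indeed immediately after stating it remarks only that ``from the existing result we can already say that $|\Omega_2(A)|\gg |A|^{4/3}$''. Your write-up correctly identifies exactly this: Proposition~\ref{pr:sp} together with Solymosi's bound gives the conjecture for all $\delta\le 1/3$, and the full Erd\H os--Szemer\'edi conjecture would imply the full statement. On the portion that is actually established, your reasoning and the paper's coincide.

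What you have written beyond that is not a proof but a research programme, and you say so yourself. The slice $d=a$ and the formula $\rho=a+\sqrt{bc}$ are already used in the paper (in the proof of Proposition~\ref{pr:sp}), so that observation does not add new leverage. The Elekes--R\'onyai/Raz--Sharir--Solymosi idea is a reasonable line to explore, but note that applying such expansion theorems to $(x,y)\mapsto x+\sqrt{y}$ with $x\in A$ and $y\in A\cdot A$ runs into the difficulty that $A\cdot A$ is not a Cartesian grid coordinate and may itself be as large as $|A|^2$, in which case the resulting bound need not beat $|A|^{4/3}$; and your incidence-counting plan for the quadric $(2\rho-a-d)^2=(a-d)^2+4bc$ is stated only as an obstacle, not a method. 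In short, nothing here is wrong, but nothing here constitutes a proof for any $\delta>1/3$, and the paper makes no stronger claim either: the conjecture remains open.
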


From the existing result we can already say that $|\Omega_2(A)|\gg |A|^{4/3}$. In the last section of the paper, we present a stronger conjecture. 

\section{Lower bounds for $|\Omega_n(A)|$ with set restrictions}

Let $A\subseteq \N$ be a finite set and $c\in \N$. Define $c*A:=\{ca: a\in A\}$ as in \cite{Na}, and 
$$\chi_A:= \max \left\{ \frac{|A\cap c*A|}{|A|} : c \text{ is not a perfect square} \right\}.$$

\begin{prop}Let $A \in \N$ be a finite set such that $\chi_A>0$. Then 
$$|\Omega_2(A)|\geq \chi_A|A|^2.$$
\end{prop}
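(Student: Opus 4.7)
The plan is to construct, for a carefully chosen $c\in\N$, a family of $2\times 2$ matrices with entries in $A$ whose spectral radii are all distinct and number at least $\chi_A|A|^2$. The idea is to use companion-like matrices of the form
$$M_{a,d}=\begin{bmatrix} a & cd \\ d & a \end{bmatrix},$$
whose characteristic polynomial is $(\lambda-a)^2-cd^2$, giving spectral radius
$$\rho(M_{a,d})=a+d\sqrt{c}.$$
The key point is that by letting $d$ range over a suitable subset of $A$ of size $\chi_A|A|$, the $(1,2)$-entry $cd$ will still lie in $A$, while the irrationality of $\sqrt c$ will force the values $a+d\sqrt c$ to be pairwise distinct.

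More precisely, I would first fix $c\in\N$, not a perfect square, that realizes the maximum in the definition of $\chi_A$, so that $|A\cap c*A|=\chi_A|A|$. Next, I set
$$B=\{d\in A : cd\in A\},$$
which is in bijection with $A\cap c*A$ via $d\mapsto cd$, so $|B|=\chi_A|A|$. For each $(a,d)\in A\times B$, the matrix $M_{a,d}$ defined above has all entries in $A$ (its entries are $a$, $cd$, $d$, and $a$, all in $A$ by construction), so $\rho(M_{a,d})\in\Omega_2(A)$.

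The counting step is then straightforward. Suppose $a+d\sqrt c=a'+d'\sqrt c$ for $(a,d),(a',d')\in A\times B$; rewriting as $(d-d')\sqrt c=a'-a$ and using that $c$ is not a perfect square (so $\sqrt c$ is irrational) while $a,a',d,d'\in\N$, we get $d=d'$ and $a=a'$. Hence the map $(a,d)\mapsto \rho(M_{a,d})$ is injective from $A\times B$ into $\Omega_2(A)$, yielding
$$|\Omega_2(A)|\geq |A|\cdot|B|=\chi_A|A|^2.$$

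There is no real obstacle here beyond spotting the right matrix family: the construction needed is essentially a rank-one perturbation $aI_2+\bigl[\begin{smallmatrix} 0 & cd\\ d & 0\end{smallmatrix}\bigr]$ of the type already used for $n=2$ in Proposition \ref{pr:sp}, and the arithmetic input is the elementary fact that $\sqrt c\notin\Q$ for non-square $c\in\N$, which is exactly why the definition of $\chi_A$ restricts to non-square $c$.
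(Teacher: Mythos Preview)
Your proof is correct and is essentially the same as the paper's: both fix a non-square $c$ realizing $\chi_A$, parametrize by $(a,d)\in A\times\{d\in A: cd\in A\}$, use the matrix $aI_2+\bigl[\begin{smallmatrix}0 & cd\\ d & 0\end{smallmatrix}\bigr]$ (or its transpose) with spectral radius $a+d\sqrt c$, and invoke the irrationality of $\sqrt c$ for injectivity. The only cosmetic difference is that the paper places $cd$ below the diagonal rather than above.
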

\begin{proof}
Let $c\in \N$ be such that $\chi_A=|A\cap c*A|/|A|>0$.
Let $A\cap c*A=\{cb_1,\ldots,cb_t\}$, $b_1,\ldots, b_t\in A$, $t=\chi_A |A|$. For each pair $(a_i,b_j)\in A\times (A\cap c*A)$, consider the following matrix in $A^{2\times 2}$:
$$\begin{pmatrix}
a_i & b_j\\ cb_j & a_i
\end{pmatrix} =
\begin{pmatrix}
a_i & 0\\ 0 & a_i
\end{pmatrix}+
\begin{pmatrix}
0 & b_j\\ cb_j & 0
\end{pmatrix}.$$
By the additive decomposition above, we see that the spectral radius of this matrix is $a_i+\sqrt{c}b_j$. Since $c$ is not a perfect square, for $(a_i,b_j)\neq (a'_i,b'_j)$, we have $a_i+\sqrt{c}b_j\neq a'_i+\sqrt{c}b'_j$. Thus we obtain $|A|\times t$ different spectral radii, and thus $|\Omega_n(A)|\geq |A|\times t = \chi_A|A|^2$. Since $\chi_A|A|\geq 1$, we also have the other inequality.\qed
\end{proof}

\begin{corol}
Let $A$ be a geometric progression in which the ratio is not a perfect square. Then $|\Omega_2(A)|\geq (|A|-1)|A|$. 
\end{corol}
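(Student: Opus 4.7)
The plan is to apply the previous proposition directly, by exhibiting a good choice of $c$ coming from the structure of the geometric progression. Write $A = \{a, ar, ar^2, \ldots, ar^{n-1}\}$ where $n = |A|$ and $r$ is the common ratio (assumed not to be a perfect square). The natural candidate is to take $c = r$ in the definition of $\chi_A$.

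With that choice, $c*A = r*A = \{ar, ar^2, \ldots, ar^n\}$, so the intersection with $A$ is exactly
$$A \cap (r*A) = \{ar, ar^2, \ldots, ar^{n-1}\},$$
which has $n-1$ elements (everything in $A$ except the leading term $a$). Hence
$$\chi_A \;\geq\; \frac{|A \cap r*A|}{|A|} \;=\; \frac{|A|-1}{|A|},$$
and since $r$ is not a perfect square, this value is admissible in the maximum defining $\chi_A$.

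Feeding this into the previous proposition yields
$$|\Omega_2(A)| \;\geq\; \chi_A |A|^2 \;\geq\; \frac{|A|-1}{|A|} \cdot |A|^2 \;=\; (|A|-1)|A|,$$
as required. There is no real obstacle here; the only subtlety is verifying that $r$ itself is a legal choice for $c$, which is immediate from the hypothesis that the ratio is not a perfect square. (If one worries that the ratio of a progression in $\mathbb{N}$ need not itself be a natural number, note that $r = ar/a$ is a positive rational, and the same computation goes through once the definition of $c*A$ is read with $c$ rational; alternatively, for an integer progression one always has $r \in \mathbb{N}$.)
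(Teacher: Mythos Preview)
Your proof is correct and follows the same route as the paper: take $c$ equal to the ratio $r$, observe that $A\cap r*A$ misses only the first term of the progression, so $\chi_A\geq (|A|-1)/|A|$, and apply the preceding proposition. The paper's proof is the one-line assertion that $\chi_A=(|A|-1)/|A|$ in this situation, which is exactly what you have verified (you only need the inequality, of course).
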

\begin{proof}
In this case, $\chi_A=(|A|-1)/|A|$.\qed
\end{proof}\medskip

We now consider the prime factors of the elements of $A$. It is clear that given a set $A$, one can divide all its elements by the greatest common divisor of its elements, obtaining a set we will denote by $A'$, with
$$|A|=|A'|\quad \text{ and }\quad |\Omega_n(A)|=|\Omega_n(A')|.$$

Let $\mathbb{P}$ be the set of prime numbers. We now define
$$\pi_A = \max \left\{\frac{|A'\cap p*\N|}{|A'|}\; : p\in \mathbb{P},\ (A'\cap p*\N)\setminus p^{2}*\mathbb{N} \ne \emptyset \right\},$$
or zero if $\{p\in \mathbb{P},\ (A'\cap p*\N)\setminus p^{2}*\mathbb{N} \ne \emptyset \}=\emptyset$.

We now consider sets $A$ for which $\pi_A > 0$ and with $|A|>1$. For these sets, we have that for some prime $p$, not all elements of $A'$ are multiples of $p$, and not all elements of $A'\cap p*\N$ are multiples of $p^2$. 

Before the result we present a technical lemma. 

\begin{lema}
Let $a_0,\ldots,a_{n-1},c,d\in \R$. Then the characteristic polynomial of 
$$
\left[\begin{array}{cccccc}
0&c& \\
&&c&\\
&&&\ddots& \\
&&&&c \\
&&&& & d\\
a_0 & a_1& a_2&\cdots&\cdots& a_{n-1}
\end{array}\right]$$
is 
$$d\sum_{i=0}^{n-2} \left( c^{n-i-2}(-1)^{n-i}a_ix^i \right) -a_{n-1}x^{n-1}+x^n,$$
or, written otherwise, 
$$(-1)^nc^{n-2}da_0+(-1)^{n-1}c^{n-3}da_1x+ \cdots + d a_{n-2}x^{n-2} - a_{n-1}x^{n-1}+x^n.$$
\end{lema}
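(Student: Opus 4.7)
The plan is to evaluate $\det(xI - M)$ directly by cofactor expansion along the bottom row of $xI - M$, which is $(-a_0, \ldots, -a_{n-2}, x - a_{n-1})$. The cofactor at column $n$ is the upper-left $(n-1)\times (n-1)$ block of $xI - M$, which is upper triangular with $x$'s on the diagonal, so its determinant is $x^{n-1}$; this contribution alone accounts for $(x-a_{n-1})x^{n-1}$, giving the top two monomials of the target polynomial.

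For each column $j \in \{1, \ldots, n-1\}$, removing the $j$-th column and the bottom row leaves a matrix with a clean block-diagonal structure: an upper-triangular $(j-1)\times(j-1)$ block with $x$'s on the diagonal in the top-left corner, and a lower-triangular $(n-j)\times(n-j)$ block in the bottom-right corner whose diagonal reads $-c, -c, \ldots, -c, -d$ (with $n-j-1$ copies of $-c$). The off-block entries vanish because the only $-c$ linking the two blocks sat in the deleted column $j$. Multiplying the two block determinants gives the minor $x^{j-1}(-c)^{n-j-1}(-d)$; combining with the entry $-a_{j-1}$ from the bottom row and the cofactor sign $(-1)^{n+j}$, and then re-indexing by $i = j-1$, yields the $n-1$ remaining terms of the sum.

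The main obstacle is sign bookkeeping: one must verify that the product of the cofactor sign, the sign contributed by the diagonal of the lower-triangular block, and the sign of $-a_{j-1}$ combines to the claimed factor on the term $a_i c^{n-i-2} d\, x^i$. I would sanity-check the two extreme columns separately, namely $j = 1$ (where the top-left block is empty and the whole submatrix is simply lower triangular) and $j = n-1$ (where the lower-right block collapses to the single entry $-d$), and also compute $n = 2$ and $n = 3$ explicitly as calibration for the general formula.

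A cleaner alternative is to observe that $M$ is similar to a companion matrix through a diagonal change of basis. Setting $D = \mathrm{diag}(1, c^{-1}, c^{-2}, \ldots, c^{-(n-2)}, c^{-(n-2)}d^{-1})$ and computing $D^{-1}MD$, one gets a matrix with $1$'s on the superdiagonal and bottom row $(c^{n-2}d\, a_0,\ c^{n-3}d\, a_1,\ \ldots,\ d\, a_{n-2},\ a_{n-1})$. This is a companion matrix in standard form, and the classical formula for the characteristic polynomial of such a matrix immediately reads off the stated polynomial, bypassing the minor-by-minor expansion entirely.
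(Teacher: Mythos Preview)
Your primary approach---cofactor expansion of $\det(xI-M)$ along the bottom row---is precisely what the paper does; its entire proof is the single sentence ``It can be seen using Laplace expansion on the last row,'' so your write-up simply supplies the block-triangular structure of the minors that the paper leaves to the reader. Your diagonal-similarity alternative, conjugating $M$ by $D=\mathrm{diag}(1,c^{-1},\ldots,c^{-(n-2)},c^{-(n-2)}d^{-1})$ to obtain a companion matrix, is a genuinely different and tidier route that the paper does not take; it sidesteps the minor-by-minor sign bookkeeping altogether. One caveat on that bookkeeping: if you actually carry it through (or read off the companion polynomial $x^n-b_{n-1}x^{n-1}-\cdots-b_0$ from your second method), the coefficient of $x^i$ for $0\le i\le n-2$ comes out as $-c^{n-i-2}d\,a_i$, not $(-1)^{n-i}c^{n-i-2}d\,a_i$ as printed---already for $n=3$ one has $\det(xI-M)=x^3-a_2x^2-da_1x-cda_0$, whereas the displayed formula would give $+da_1x$. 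This is a sign slip in the lemma's statement rather than a flaw in your argument, and it is harmless for the only application in the paper (Eisenstein's criterion depends on $p$-divisibility of the coefficients, not on their signs).
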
 
\begin{proof}
It can be seen using Laplace expansion on the last row. \qed
\end{proof}

\begin{theo}
For any finite set $A\subseteq \N$ with $\pi_A > 0$, 
$$|\Omega_n(A\cup \{0\})|\geq \pi_A |A|^{n-1} \geq |A|^{n-2}.$$
\end{theo}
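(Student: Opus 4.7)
The plan is to use the preceding lemma to produce a family of matrices in $(A\cup\{0\})^{n\times n}$ whose characteristic polynomials are Eisenstein polynomials with respect to $p$, and to deduce from the resulting irreducibility that distinct matrices in the family have distinct spectral radii.

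I would begin by replacing $A$ with $A'$, which changes neither $|A|$ nor $|\Omega_n(A\cup\{0\})|$, in order to assume $\gcd(A)=1$. I would then fix a prime $p$ realizing the maximum in the definition of $\pi_A$; the defining condition supplies an element $d\in B:=A\cap p*\N$ with $d\notin p^2*\N$, and $\gcd(A)=1$ supplies elements $c,a_0\in A\setminus p*\N$.

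Next, I would apply the lemma with these choices of $c$, $d$ and $a_0$ and with tuples $(a_1,\ldots,a_{n-2},a_{n-1})$ ranging over $A^{n-2}\times B$. Of the two conditions Eisenstein imposes, the one requiring $p$ to divide each non-leading coefficient is automatic for the coefficients of $x^0,\ldots,x^{n-2}$, which all carry the factor $d$, and is ensured for the coefficient of $x^{n-1}$ by the choice $a_{n-1}\in B$; the condition that the constant term fail to be divisible by $p^2$ is ensured by $c,a_0\notin p*\N$ together with $d\notin p^2*\N$. So each characteristic polynomial in this family is irreducible over $\Q$ and is therefore the minimal polynomial of its own Perron-Frobenius eigenvalue.

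Finally, I would argue that the $|B|\cdot|A|^{n-2}=\pi_A|A|^{n-1}$ tuples produce pairwise distinct spectral radii: any two tuples yielding the same $\rho$ would share the same irreducible minimal polynomial, hence the same characteristic polynomial, and the $a_i$ can be recovered from the coefficients of that polynomial because $c$ and $d$ are nonzero. The remaining inequality $\pi_A|A|^{n-1}\ge|A|^{n-2}$ reduces to $|B|\ge 1$, which is immediate. The step I expect to be the main obstacle is arranging the parameters so that Eisenstein applies while still leaving $\pi_A|A|^{n-1}$ independent degrees of freedom; once that is done, the rest is bookkeeping.
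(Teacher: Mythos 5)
Your proposal is correct and follows essentially the same route as the paper: reduce to $A'$, pick $p$, $c$, $d$ so that the lemma's characteristic polynomial is Eisenstein at $p$, and conclude that distinct tuples in $A^{n-2}\times(A'\cap p*\N)$ give distinct irreducible minimal polynomials and hence distinct spectral radii. The only (immaterial) difference is that you keep $a_0$ as a free parameter in $A'\setminus p*\N$, whereas the paper simply sets $a_0=c$.
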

\begin{proof} If $|A|=1$, the result is obvious. Now we consider that $|A|>1$. 
As before, $|\Omega_n(A\cup \{0\})|=|\Omega_n(A'\cup \{0\})|$.
Let $p\in \mathbb{P}$ such that $\pi_A=|A'\cap p*\N|/|A'|>0$. Fix 
$$c\in A'\setminus p\N\quad \text{and}\quad d\in(A'\cap p*\N)\setminus p^2*\N.$$ 
For each $(a_1,\ldots, a_{n-2}) \in (A')^{n-2}$ and $a_{n-1}\in A'\cap p*\N$, consider the matrix
$$
M(a_1,\ldots, a_{n-1})=\left[\begin{array}{cccccc}
0&c& \\
&&c&\\
&&&\ddots& \\
&&&&c \\
&&&& & d\\
c & a_1& a_2&\cdots&\cdots& a_{n-1}
\end{array}\right].$$
By the previous lemma, the characteristic polynomial of this matrix is
$$f(x)=(-1)^nc^{n-1}d+(-1)^{n-1}c^{n-3}da_1x+ \cdots + d a_{n-2}x^{n-2} - a_{n-1}x^{n-1}+x^{n}.$$
By Eisenstein's criterion, this polynomial is irreducible in $\mathbb{Q}[x]$. Therefore,
$f(x)$ is the minimal polynomial of the spectral radius of $M(a_1,\ldots, a_{n-1})$.

This means that for different elements in $(A')^{n-2}\times (A'\cap p*\N)$, we get different characteristic polynomials, all monic and all irreducible. This implies that they are the minimal polynomials of the spectral radii of the matrices associated to them. Therefore, these spectral radii have to be different, since their minimal polynomial over $\Q$ is not the same. In other words, the following map is injective.
$$\begin{matrix}
(A')^{n-2}\times (A'\cap p*\N) & \to & \Omega_n(A\cup \{0\})\\
(a_1,\ldots,a_{n-1}) & \mapsto & \rho(M(a_1,\ldots,a_n))
\end{matrix}$$

Therefore we get
$$|\Omega_n(A\cup \{0\})|\geq |A|^{n-2}\times |A'\cap p*\N| = \pi_A|A|^{n-1}\geq |A|^{n-2},$$
as we claimed.
\qed
\end{proof}

\begin{corol}
Let $A$ be a geometric progression with prime ratio. Then $|\Omega_n(A\cup\{0\})|\geq (|A|-1)|A|^{n-2}$. 
\end{corol}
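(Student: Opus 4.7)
The plan is to reduce the corollary directly to the preceding theorem by computing (or at least bounding from below) $\pi_A$ for a geometric progression with prime ratio, and then substituting into the bound $|\Omega_n(A\cup\{0\})|\geq \pi_A|A|^{n-1}$.

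First I would write $A=\{a,ar,ar^{2},\ldots,ar^{k-1}\}$ with $r\in\mathbb{P}$ and $k=|A|$. Since the greatest common divisor of the elements of $A$ is $a$, dividing through yields
\[
A'=\{1,r,r^{2},\ldots,r^{k-1}\}.
\]
This step is purely definitional and does not affect $\Omega_n$, as noted in the paragraph preceding the definition of $\pi_A$.

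Next I would verify that the prime $p=r$ is admissible in the maximum defining $\pi_A$. Indeed,
\[
A'\cap r*\N=\{r,r^{2},\ldots,r^{k-1}\},
\]
which has $k-1=|A|-1$ elements, and $(A'\cap r*\N)\setminus r^{2}*\N=\{r\}\neq\emptyset$, so the corresponding index is achieved. Hence
\[
\pi_A\geq\frac{|A'\cap r*\N|}{|A'|}=\frac{|A|-1}{|A|}.
\]

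Finally, I would apply the theorem to conclude
\[
|\Omega_n(A\cup\{0\})|\geq \pi_A\,|A|^{n-1}\geq\frac{|A|-1}{|A|}\,|A|^{n-1}=(|A|-1)|A|^{n-2},
\]
which is the stated bound. There is no real obstacle here: the only mild subtlety is making sure that one correctly identifies the gcd of the geometric progression so that $A'$ is the pure power set $\{1,r,\ldots,r^{k-1}\}$, after which the choice of prime witnessing $\pi_A$ is immediate.
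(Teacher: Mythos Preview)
Your proof is correct and follows essentially the same approach as the paper: identify $A'$ for the geometric progression, verify that the prime ratio $r$ witnesses $\pi_A\geq(|A|-1)/|A|$, and substitute into the theorem's bound. The only cosmetic difference is that the paper asserts the equality $\pi_A=(|A|-1)/|A|$ (which follows since no prime other than $r$ divides any element of $A'=\{1,r,\ldots,r^{k-1}\}$), whereas you establish only the inequality---but the inequality is all that is needed.
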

\begin{proof}
If $A$ is a geometric progression with prime ratio $p$, then 
$$A'=\{a,pa,p^2a,\ldots,p^ta\},$$
with $p\notmid a$.
In this case, $\pi_A=(|A|-1)/|A|$.\qed
\end{proof}\medskip

\section{Lower bounds for $|\Omega_n(A)|$ for $A\subseteq \R^+$}

Now we recall a simple lower bound that we have for $|A+A|$.

\begin{prop}
Let $A\subseteq \N$ be a finite set. We have
$$|A+A|\geq 2|A|-1.$$
\end{prop}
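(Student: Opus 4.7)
The plan is to produce an explicit strictly increasing chain of $2|A|-1$ elements of $A+A$, which immediately gives the lower bound on cardinality. This is a standard textbook argument, so the main work is just setting up notation cleanly.

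First I would order the elements of $A$ as $a_1 < a_2 < \cdots < a_n$, where $n = |A|$. Then I would consider the two chains
$$a_1 + a_1 < a_1 + a_2 < \cdots < a_1 + a_n$$
and
$$a_1 + a_n < a_2 + a_n < \cdots < a_n + a_n,$$
both of which consist of elements of $A+A$ and are strictly increasing because $A$ itself is. Concatenating them at their common endpoint $a_1+a_n$ yields a strictly increasing sequence of $n + (n-1) = 2n-1$ distinct elements of $A+A$.

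The only possible obstacle is a small bookkeeping issue: making sure no element is counted twice across the two chains. This is handled by noting that the first chain's maximum $a_1+a_n$ is identified with the second chain's minimum, so the total count is $n + n - 1 = 2|A|-1$, not $2n$. From this the inequality $|A+A| \geq 2|A|-1$ follows at once.
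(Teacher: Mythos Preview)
Your proof is correct and follows exactly the approach sketched in the paper: order $A$, start at $a_1+a_1$, and increase one summand at a time to build a strictly increasing chain of $2(|A|-1)+1=2|A|-1$ sums. There is nothing to add.
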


Let $A=\{a_1<\cdots<a_k\}$. The technique used in proving this proposition consists in building an increasing sequence of $2(|A|-1)+1$ sums: you start with $a_1+a_1$ and increase successively one of the summands, and then the other. In each summand we have $|A|-1$ increases, which yields a total sequence of length $2(|A|-1)+1$. \medskip

We know from \cite[Th.\ 2.7]{Va} that for a positive matrix, each time you increase an entry, the spectral radius also increases. Therefore, using the same technique, one could get a lower bound of $n^2(|A|-1)+1$. However, with an extra result about the possibility of increasing the spectral radius by exchanging entries in a row, one can build a longer sequence of matrices with increasing spectral radii.\medskip

This is the result we are about to prove, using a generalization of this technique.

\begin{theo} Keeping the previous notation, we have the following lower bounds for $|\Omega_n(A)|$ if $n,|A|>1$ and $\max\{n,|A|\}>2$.\medskip

If $|A|<n$: 
$$n\frac{(|A|-1)|A|(2|A|-1)}{3} + n(|A|-1)(|A|-2)(n-|A|) + n|A|(n-|A|+1) -n+1.$$

If $|A|\geq n$: 
$$n\frac{(n-1)n(2n-1)}{3}+n^3(|A|-n)+n^2-n+1.$$ 
We note that the formulas coincide if $|A|=n$.
\end{theo}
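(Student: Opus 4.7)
The plan is to construct an explicit chain of distinct matrices $M_0, M_1, \ldots, M_N$ in $A^{n\times n}$ with $\rho(M_0)<\rho(M_1)<\cdots<\rho(M_N)$ and $N+1$ equal to the claimed lower bound. The chain starts with $M_0$ having all entries equal to $a_1:=\min A$, and each successive matrix is obtained from its predecessor by a single elementary move: either incrementing one entry to a larger element of $A$, or transposing two entries within a single row.

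The two tools are the cited theorem of Varga (incrementing a single entry of a positive matrix strictly increases the spectral radius) and an exchange lemma that I would isolate and prove: if $M$ is positive with Perron eigenvector $v$ and $M'$ differs from $M$ only by transposing two entries of row $i$, then $\rho(M')>\rho(M)$ whenever $\sum_j M'_{ij}v_j>\sum_j M_{ij}v_j$. This is immediate from Collatz-Wielandt: $(M'v)_\ell=\rho(M)v_\ell$ for $\ell\neq i$ while $(M'v)_i>\rho(M)v_i$, so $M'v\geq\rho(M)v$ with strict inequality in one coordinate and hence $\rho(M')>\rho(M)$ by positivity. The useful case is the swap that places the larger of two row entries in the column whose Perron coordinate is larger.

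With these in hand, I would process the $n^2$ positions of the matrix in a prescribed order, generalising the one-dimensional argument in the preceding proposition. The $n(|A|-1)$ increments performed on each row give the basic Varga contribution; the extra swaps are inserted at moments when the row being worked on temporarily holds two entries in the ``wrong'' order relative to the Perron eigenvector of the current intermediate matrix. Because the matrix is built up from the all-$a_1$ start, the still-unmodified rows provide enough structure to read off the relative ordering of the coordinates of $v$ at each step, so each inserted transposition is certified to strictly increase $\rho$. Summing the per-row contribution $\frac{(n-1)n(2n-1)}{3}+n^2(|A|-n)+(n-1)$ (for $|A|\geq n$) over $n$ rows and adding $1$ for $M_0$ reproduces the formula. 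The case $|A|<n$ requires separate bookkeeping because each row of length $n$ must contain repeated entries: the multiset of row entries is restricted and so are the admissible transpositions, yielding the alternative per-row count.

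The main obstacle is the concrete bookkeeping. I must exhibit an explicit ordering of moves attaining the per-row count and certify, at each transposition, that the exchange is made in the direction favoured by the Perron eigenvector of the current intermediate matrix. I expect the certification to reduce to a short analysis of the sign pattern of $v$ for matrices of the specific shape that arise along the chain (rows still at $(a_1,\ldots,a_1)$ versus rows already processed), but matching the combinatorial count exactly in both regimes of $|A|$ relative to $n$ is where the delicate work lies.
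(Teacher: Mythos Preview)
Your outline matches the paper's strategy: build a chain of matrices from the all-$a_1$ matrix to the all-$a_k$ matrix, using Varga's monotonicity for single-entry increments and a Collatz--Wielandt swap lemma for within-row transpositions, and count the length of the chain. Your statement and proof of the swap lemma are exactly what the paper uses (its Proposition~3.4), and your arithmetic check that the per-row count multiplied by $n$ plus $1$ reproduces the formula is correct.

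There is, however, a real organizational point you are glossing over, and it is precisely where you say the ``delicate work lies.'' Your phrasing ``summing the per-row contribution \ldots\ over $n$ rows'' suggests processing row~$1$ to completion, then row~$2$, and so on. That ordering does \emph{not} give you the Perron-vector information you need. The certification lemma the paper proves (its Proposition~3.3) requires the not-yet-advanced rows to be \emph{equal to one another}, not merely ``unmodified'': it says that if the first $m$ rows have row sum $a$ and the remaining rows are all equal with sum $b<a$, then $v_r>v_s$ for $r\le m<s$. If you finish row~$1$ (now all $a_k$) and start on row~$2$, the matrix has three blocks of rows with three different sums, and you no longer know how $v_2$ compares to $v_1$ or to $v_3,\ldots,v_n$.

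The paper's fix is to organize the chain not by matrix rows but by a sequence of row-vectors $L_1,L_2,\ldots$ (each obtained from its predecessor by a single increment). At every moment the matrix has its first $r$ rows equal to permutations of $L_{i+1}$ (all with the same row sum $s_{i+1}$) and its last $n-r$ rows equal to $L_i$; this is exactly the shape to which the certification lemma applies. The swaps are then performed on the $r$-th row, using only the information $v_1,\ldots,v_r>v_{r+1}=\cdots=v_n$. When all $n$ rows have been advanced and swapped, one jumps to the matrix with all rows equal to $L_{i+1}$ (row sums all equal to $s_{i+1}$, so $\rho=s_{i+1}$, strictly larger than before since the previous matrix still had one row of sum $s_i$). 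The count of swaps available at each stage is governed by the number of ``increments'' in $L_{i+1}$, and the case split $|A|<n$ versus $|A|\ge n$ enters in counting how many $L_i$ have a given number of increments. So your intuition and your arithmetic are right, but the chain must be built stage-by-stage in $L_i$, not row-by-row in the matrix, for the Perron certification to go through.
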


We notice that if we fix $n$ and let $|A|$ increase, the lower bound we get is $O(|A|)$, but if we let both $|A|$ and $n$ increase, the bound gets better. For instance if $|A|=n$, the lower bound is $O(n^4)$. \medskip

We now prove this result in two steps: first we establish some auxiliary results, and then build a sequence of matrices, with strictly increasing spectral radius, with length given by the formulas above. 

\subsection*{Initial results}

\begin{prop}\label{rows_perron}
Let $M=[a_{ij}]\in (\R^+)^{n\times n}$ be a matrix in which the first $m$ rows have sum $a$ and the remaining rows are equal, with sum $b<a$. Let $(v_1,\ldots,v_n)$ be the Perron vector. Then for $r\leq m < s$ we have $v_r>v_s$. 
\end{prop}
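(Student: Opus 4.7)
The plan is to work directly from the Perron eigenvalue equation $Mv=\rho v$, where $\rho=\rho(M)$, using two standard consequences of the fact that $M$ is strictly positive: the Perron vector $v$ has all entries strictly positive, and the spectral radius lies strictly between the smallest and largest row sums of $M$ whenever these are not all equal. Here the row sums take only the two values $a$ and $b$ with $b<a$, so this immediately gives $b<\rho<a$.

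The first structural observation is that since rows $m+1,\ldots,n$ of $M$ are identical, the components $(Mv)_{m+1},\ldots,(Mv)_n$ are all equal; dividing by $\rho>0$ yields $v_{m+1}=\cdots=v_n$, and I will call this common value $w$. The proposition then reduces to proving $v_r>w$ for every $r\le m$.

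For the main step I would let $r_0\in\{1,\ldots,m\}$ be an index minimising $v_r$ and set $\alpha=v_{r_0}$; the goal becomes $\alpha>w$. I argue by contradiction, assuming $\alpha\le w$, in which case $\alpha$ is the global minimum of the entries of $v$. Splitting the Perron equation at row $r_0$ according to whether $j\le m$ or $j>m$ (where $v_j=w\ge\alpha$), and using $v_j\ge\alpha$ together with $a_{r_0 j}>0$, I get
\[
\rho\alpha \;=\; \sum_{j=1}^{n} a_{r_0 j}\,v_j \;\ge\; \alpha\sum_{j=1}^{n} a_{r_0 j} \;=\; \alpha\,a,
\]
hence $\rho\ge a$, contradicting $\rho<a$. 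Therefore $\alpha>w$, which is exactly the conclusion $v_r>v_s$ for all $r\le m<s$.

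The only point that really needs care is the strict inequality $\rho<a$, i.e.\ that a positive matrix whose row sums are not all equal has spectral radius strictly less than its maximum row sum. I would justify this briefly by evaluating the Perron equation at an index $k$ where $v_k$ is largest: $\rho v_k=\sum_j a_{kj}v_j\le v_k\sum_j a_{kj}$, with equality forcing $v_j=v_k$ for every $j$, after which the Perron equations at the remaining rows would force all row sums to coincide, contradicting $a\neq b$. Everything else in the argument is a one-line manipulation of the Perron identity.
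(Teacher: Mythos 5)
Your proof is correct and follows essentially the same route as the paper's: establish $v_{m+1}=\cdots=v_n$ from the equality of the last rows, take the minimising index among the first $m$, and derive the contradiction $\rho\ge a$ from the Perron equation at that row. The only difference is that you also supply a short justification of the standard fact $b<\rho<a$, which the paper simply cites as known.
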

\begin{proof} Let $\rho$ be the spectral radius of $M$, we know that $b<\rho<a$. Take $1\leq t\leq m$ such that $v_t=\min \{v_i : 1\leq i\leq m\}$. It is straightforward to check that $v_{m+1}=v_{m+2}=\ldots=v_n$. We then wish to show that $v_t>v_n$. Suppose, to the contrary, that $v_t\leq v_n$. This would imply that $v_t\leq v_j$ for all $1\leq j\leq n$ and  
$$\rho v_t = \sum_{j=1}^n a_{tj}v_j\geq \sum_{j=1}^n a_{tj}v_t = av_t,$$
which is false. Therefore we must have $v_t>v_n$ as we claimed.\qed
\end{proof}
\medskip

It is known that the entries of the Perron vector of a positive matrix are not always in the same order as the row sums---see for instance the matrix $A^2$ on p.\ 1159 of \cite{FHSSW} (the authors thank Hans Schneider for calling their attention to this paper). The result is true for $2\times 2$ positive matrices, by the previous result. One might think that it is also true for all $3\times 3$ positive matrices, but this is false: the requirement that two row sums are equal is necessary. The authors thank Shmuel Friedland for producing the next counter-example.\medskip

Take a $2\times 2$ matrix 
$$\begin{pmatrix} a & b\\ c & d \end{pmatrix}$$
with Perron vector $(v_1,v_2)$. Assume that $v_1>v_2$ and $a+b>b+c$. If we take $a_t:= a+t$, $b_t=b-tv_1/v_2$, we get that the matrix 
$$\begin{pmatrix} a_t & b_t\\ c & d \end{pmatrix}$$
is still positive for small positive values of $t$, has the same Perron vector but the first row has sum smaller that $a+b$. 

Taking one such value for $t$, consider the following $3\times 3$ matrix:
$$\begin{pmatrix} 
a/2 & a/2 & b\\ 
a_t/2 & a_t/2 & b_t\\ 
c/2 & c/2 & d 
\end{pmatrix}.$$
Its Perron vector is $(v_1,v_1,v_2)$. The first and second entries are equal, yet the corresponding row sums are not the same. \medskip

The proof of the next result is adapted from \cite{Sc}.

\begin{prop}
\label{schwarz}
Let $M=[m_{ij}]\in (\R^+)^{n\times n}$ and let $v=(v_1,\ldots,v_n)$ be its Perron vector. Suppose that for some indices $r,s,t$, with $s<t$, 
$$(m_{rt}-m_{rs})(v_s-v_t)>0.$$
Then the matrix obtained from $M$ by exchanging entries $m_{rs}$ and $m_{rt}$ has a greater spectral radius than $M$.
\end{prop}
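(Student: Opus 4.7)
The plan is to compare $M'v$ directly with $Mv=\rho v$, where $M'$ denotes the matrix obtained from $M$ by swapping the entries $m_{rs}$ and $m_{rt}$ and $\rho:=\rho(M)$. Since only the $r$-th row of $M$ is altered, $(M'v)_i=(Mv)_i=\rho v_i$ for every $i\neq r$, while expansion of the $r$-th coordinate gives
$$(M'v)_r-(Mv)_r = m_{rt}v_s+m_{rs}v_t-m_{rs}v_s-m_{rt}v_t = (m_{rt}-m_{rs})(v_s-v_t),$$
which is strictly positive by hypothesis. Hence $M'v\geq \rho v$ componentwise, with strict inequality in coordinate $r$.

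To translate this into a strict inequality of spectral radii, I would invoke the Collatz--Wielandt bound
$$\rho(N) \geq \min_i \frac{(Nx)_i}{x_i},$$
valid for every positive vector $x$ and every nonnegative matrix $N$. Plugging $N=M'$ and $x=v$ already yields $\rho(M')\geq \rho$. To sharpen this, I would exploit that $M'$ is entrywise positive (being a mere rearrangement of a row of a positive matrix): the vector $w := M'v-\rho v$ is nonnegative and nonzero, so $M'w$ is entrywise strictly positive, and therefore
$$(M')^2 v = \rho M'v + M'w > \rho^2 v$$
componentwise. Since $v>0$, there exists $\delta>0$ with $(M')^2 v \geq (1+\delta)\rho^2 v$; Collatz--Wielandt applied to $(M')^2$ then gives $\rho(M')^2 = \rho((M')^2)\geq (1+\delta)\rho^2$, whence $\rho(M')>\rho(M)$.

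The main subtlety lies in this final upgrade from a weak coordinatewise inequality (tight in every coordinate except $r$) to a strict inequality of spectral radii; the Collatz--Wielandt estimate alone delivers only $\geq$. The standard trick I intend to use is to apply $M'$ once more in order to diffuse the single strict surplus in coordinate $r$ across all coordinates, and this step is precisely where the positivity of every entry of $M'$ is indispensable.
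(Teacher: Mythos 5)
Your proposal is correct and follows the paper's argument: the same row-$r$ computation showing $(M'v)_r-(Mv)_r=(m_{rt}-m_{rs})(v_s-v_t)>0$, hence $M'v\geq\rho(M)v$ with $M'v\neq\rho(M)v$. The only difference is that the paper simply cites the standard fact (Berman--Plemmons) that this forces $\rho(M')>\rho(M)$, whereas you supply a self-contained proof of that fact via Collatz--Wielandt applied to $(M')^2$; your upgrade step is valid and makes the argument self-contained, but it is not a different route.
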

\begin{proof} For $N\in (\R^+)^{n\times n}$, denote its spectral radius by $\rho(N)$. It is known that if we can find a positive vector $x$ and $\tau>0$ such that $Nx\geq \tau x$, $Nx\neq \tau x$ then $\rho(N)>\tau$ (see \cite[p.\ 28]{BP}). 

Let $L$ be the matrix obtained from $M$ exchanging entries $(r,s)$ and $(r,t)$. For $i\neq r$, we have that
$$\sum_{j=1}^n l_{ij}v_j= \sum_{j=1}^n m_{ij}v_j = \rho(M) v_i,$$
and in row $r$, 
\begin{eqnarray*}
\sum_{j=1}^n l_{rj}v_j-\rho(M)v_r & = & \sum_{j=1}^n (l_{rj}-m_{rj})v_j\\
& = & (m_{rs}-m_{rt})v_t +(m_{rt}-m_{rs})v_s \\
& = & (m_{rt}-m_{rs})(v_s-v_t).
\end{eqnarray*}
If the last number is positive, then $Lv\geq \rho(M)v$ and $Lv\neq \rho(M)v$, therefore $\rho(L)>\rho(M)$.\qed\end{proof}

\subsection*{An initial sequence of matrices}

We now build a sequence of matrices, as in the result mentioned in the beginning of this section.

Let
$$A=\{a_1<a_2<\cdots<a_k\},\quad |A|=k.$$

We consider the partial order defined componentwise in $\N^n$. From now on we will just consider non-decreasing elements in $\N^n$. We say that a certain element $(i_1,\ldots,i_n)\in \N^n$ has an increment in position $(r,r+1)$ if $i_r<i_{r+1}$. The value of this increment will be $i_{r+1}-i_r>0$.\medskip

Consider now the following sequence of $n$-tuples: start by the element that has all entries equal to 1, and use the following rule iteratively, until we reach the $n$-tuple with all entries equal to $k$.

\begin{itemize}
\item If there is an increment of 2 in position $(r,r+1)$, then increase the element in the $r$-th position by 1.
\item If there is no increment of 2, then look for the rightmost element that is not equal to $k$ and increase that element by 1.
\end{itemize}

As an example, suppose $n=5$ and $k= 4$. The sequence is
$$\begin{array}{l}
(1,1,1,1,1),\\
(1,1,1,1,2),\\
(1,1,1,1,3), (1,1,1,2,3),\\
(1,1,1,2,4), (1,1,1,3,4), (1,1,2,3,4),\\
(1,1,2,4,4), (1,1,3,4,4), (1,2,3,4,4),\\
(1,2,4,4,4),(1,3,4,4,4),(2,3,4,4,4)\\
(2,4,4,4,4),(3,4,4,4,4),\\
(4,4,4,4,4).
\end{array}$$

It is easy to check that in each $n$-tuple, the order of the entries is non-decreasing, with possible increments of 1 or 2. In addition, there can only be one increment of 2. We note that we get a total number of $1+n(k-1)$ rows, since we have $n$ entries, and each one is incremented $k-1$ times.\medskip

Taking $(r_1,\ldots,r_n)$ the $i$-th element on this list, we will call $L_i$ the row matrix $[a_{r_1} \ \cdots \ a_{r_n}]\in A^{1\times n}$. Now we build a sequence of matrices in $A^{n\times n}$ defining the rows of each matrix. The first matrix will have all rows equal to $L_1$. Then we replace, successively, from row 1 to row $n$, $L_1$ by $L_2$, until we get a matrix with all rows equal to $L_2$. Now we do the same with $L_3$, and proceed in this manner, until we get a matrix in which all entries are equal to $a_k$. \medskip

In this sequence of matrices, each element is obtained by the previous one by replacing an element $a_j$ in a some entry by $a_{j+1}$. Therefore each matrix entry starts at $a_1$ and 
ends at $a_k$, and thus the sequence must have length 
\begin{equation}
n^2(k-1)+1, 
\label{length}
\end{equation}
counting the $k-1$ transitions of each entry plus the initial matrix. \medskip

As we mentioned, we know from \cite[Th.\ 2.7]{Va} that if in a positive matrix the value of one entry is increased, then the spectral radius also increases. So in this sequence of matrices the spectral radius strictly increases as we move forward in the sequence.\medskip

Before we move on, we count how many rows there are with $t$ increments, for later use.\medskip

\begin{enumerate}
\item If $t<\min\{n-2, k-2\}$, then all increments are either at the beginning or the end of the row. In each case, we get $t+1$ rows: one with no increment of 2, and $t$ others with an increment of 2 (there are $t$ positions where this increment can appear). We thus get $2(t+1)$ rows.

\item Rows with $t=\min\{n-2,k-2\}$ increments. We consider two cases. 
\begin{enumerate}
\item $k < n$. In this case, we have, along with all rows with increments either at the beginning or at the end, there are also rows in which the $k-2$ increments are between positions $(2,3)$ and $(n-2,n-1)$---call these {\em internal increments}. Since the element before the first increment can happen in positions 2 through $n-k+1$, we have a total of $n-k$ possibilities for this.  We must also have an increment of 2 at some point, and this one can go in $k-2$ different positions. So the number of rows with internal increments is $(n-k)(k-2)$.
\item $k \geq n$. In this case, the computation of the number of rows with $n-2$ increments is done as in 1, thus getting $2(n-1)$ rows.
\end{enumerate}

\item Maximum number of increments.
\begin{enumerate}
\item If $k<n$, there are $n-k+1$ rows with $k-1$ increments, as this is the number of positions for the element before the first increment.
\item If $k\geq n$, to count the rows with $n-1$ increments (the maximum possible number), we consider two possibilities. If the element in position 1 is between 1 and $k-n$ (in case $k>n$), we get $n$ rows, since either we have no increment of 2 or we do have one, and there are $n-1$ positions where this increment can be. If the element in position 1 is $k-n+1$, we get just one row. The number of rows is thus $n(k-n)+1$.
\end{enumerate}

\end{enumerate}

To confirm these reckonings, we are going to count all the matrices we get, grouping them by number of increments. We know this number has to be $n^2 (k-1)+1$, as in equation (\ref{length}). 

Notice that each row $L_i\neq L_1$ appears in $n$ matrices of the following form: for $1\leq r\leq n$, consider the matrix in which all rows from 1 to $r$ are equal to $L_i$ and the remaining ones are equal to $L_{i-1}$. \medskip

\begin{enumerate}
\item Case $k<n$. 
$$\underbrace{2\frac{k(k-1)}{2}}_{\text{increments at the ends}} + \underbrace{(n-k)(k-2)}_{\text{internal increments}}+n-k+1=$$
$$=n(k-1)+1.$$
Subtracting 1, for row $L_1$, multiplying the remaining ones by $n$, and adding 1 again, we get $n^2 (k-1)+1$.
\item Case $k\geq n$. 
$$\underbrace{2\frac{n(n-1)}{2}}_{\text{increments at the ends}} +\, n(k-n)+1=$$
$$=n(k-1)+1.$$
\end{enumerate}
The final computation is the same. 

\subsection*{Modifying the sequence}

We will now alter this sequence, adding some more matrices. Let $s_i$ be the row sum of $L_i$, we have that if $i<j$, then $s_i<s_j$. 
In view of what will follow, we consider matrices $M$ such that:

\begin{itemize}
\item The first $r$ rows have sum $s_i$.
\item The remaining ones are equal to $L_{i-1}$.
\end{itemize} 
By Proposition \ref{rows_perron}, the $r$ first entries of the Perron vector of this matrix are bigger than the remaining ones. Let $v=(v_1,\ldots,v_n)$ be the Perron vector.
\smallskip

Let $m_{ri}$ and $m_{rj}$, $i<j$, be two elements of row $r$ of $M$. By what we have seen in Proposition \ref{schwarz}, if $m_{ri}<m_{rj}$ and $v_i>v_j$, if we exchange $m_{ri}$ and $m_{rj}$, we get a new matrix with strictly larger spectral radius. Now we establish how many times we can do this in the row matrix $L_i$.

\begin{claim} If the row matrix $L_i$ is placed in position $r$ of matrix $M$ and has $t$ increments, then we can make $t$ of these exchanges, thus obtaining $t+1$ matrices with distinct spectral radii (including the one before any exchange is done). 
\end{claim}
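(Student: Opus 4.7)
I plan to prove the claim by exhibiting an explicit sequence of $t$ exchanges on row $r$ of $M$, one per increment of $L_i$, each satisfying the hypotheses of Proposition~\ref{schwarz}. Enumerate the increments of $L_i$ as $p_1<p_2<\cdots<p_t$, so that $a_{r_{p_j}}<a_{r_{p_j+1}}$ for every $j$, and let $j_0$ be the largest index with $p_{j_0}\le r$ (set $j_0=0$ if none exists). I organise the exchanges in two phases: in \emph{Phase~I}, processed in the decreasing order $j=j_0,j_0-1,\dots,1$, I exchange the entries of columns $p_j$ and $r+1$ in row $r$; in \emph{Phase~II}, processed in the increasing order $j=j_0+1,\dots,t$, I exchange the entries of columns $r$ and $p_j+1$. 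Every such exchange pairs a column $s\le r$ with a column $u>r$, and because exchanging preserves row-sums, the first $r$ rows of the current matrix still have common sum $s_i$ while the bottom rows remain unchanged copies of $L_{i-1}$; Proposition~\ref{rows_perron} therefore applies at every step and gives $v_s>v_u$ for the current Perron vector. The only nontrivial hypothesis of Proposition~\ref{schwarz} to verify is thus the value inequality $m_{rs}<m_{ru}$ in row $r$ immediately before each exchange.

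The value inequalities I will establish by a direct induction on the step. Each column $p_j$ is visited exactly once (at its own step), so the only columns that are revisited across the whole procedure are column $r+1$ during Phase~I and column $r$ during Phase~II. A short bookkeeping gives the invariants: at the start of Phase~I step $j$ column $r+1$ carries $a_{r_{p_{j+1}}}$ (or the original entry $a_{r_{r+1}}$ when $j=j_0$), while column $p_j$ still holds the untouched $a_{r_{p_j}}$, and strict inequality follows because the increment at $p_j$ yields $a_{r_{p_j}}<a_{r_{p_j+1}}\le a_{r_{p_{j+1}}}$ by monotonicity of $L_i$; analogously, at the start of Phase~II step $j$ column $r$ carries $a_{r_{p_{j-1}+1}}$ (with the appropriate initial value when $j=j_0+1$) while column $p_j+1$ still holds $a_{r_{p_j+1}}$, and strict inequality again comes from the increment at $p_j$.

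With every value inequality in place, Proposition~\ref{schwarz} produces $t$ strict increases of the spectral radius, so the original matrix together with the matrices obtained after each successive exchange gives a list of $t+1$ matrices with pairwise distinct spectral radii, as required. The main obstacle I anticipate is the handover between the two phases in the case $p_{j_0}=r$: here the first Phase~I exchange has already modified column $r$ to $a_{r_{r+1}}$, so when Phase~II begins the value at column $r$ is not $a_{r_r}$ but $a_{r_{r+1}}$, and the first Phase~II exchange must be checked against this updated value. The required inequality $a_{r_{r+1}}<a_{r_{p_{j_0+1}+1}}$ still follows from $r+1\le p_{j_0+1}$ and the strict increment at $p_{j_0+1}$, but this transitional bookkeeping is the most delicate part of the proof.
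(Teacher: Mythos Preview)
Your argument is correct and follows essentially the same two-phase strategy as the paper: process the increments to the left of position $r$ by repeated swaps into a fixed column on the right, then process the increments to the right of $r$ by swaps into a fixed column on the left, invoking Propositions~\ref{rows_perron} and~\ref{schwarz} at each step. The only difference is that you pivot on columns $r+1$ and $r$ whereas the paper pivots on columns $n$ and $1$ (and splits into three cases); your uniform description avoids the case analysis at the cost of the handover at $p_{j_0}=r$, which you identify and resolve correctly.
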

To verify this, we use the following algorithm, which we separate in three cases for the sake of clarity.\medskip

{\bf Case 1}. Every increment in $L_i$ appears before $(r,r+1)$ (which means that from position $r+1$ onward, all entries of $L_i$ are equal). 

Let $m_{rr'}$ be the entry before the last increment, $r'\leq r$, we place elements  $m_{rr'}$, $m_{rr'-1}$, $m_{rr'-2}$, \ldots successively in position $n$ of this row. For instance, if $r=4$, e $L_i=[3\ 4\ 5\ 6\ 6\ 6]$, the sequence is
$$[3 4 5 6 6 6]\to [3 4 6 6 6 5] \to [3 5 6 6 6 4] \to [4 5 6 6 6 3].$$
It is easy to check that these exchanges are in the conditions of Proposition \ref{schwarz}.\smallskip

{\bf Case 2}. Every increment of $L_i$ appears after position $(r,r+1)$ (so, before position $r$, all entries of $L_i$ are equal). We do a similar algorithm, only here we do the exchanges with the first entry in the row, starting with the entry after the first increment. We present an example, similar to the previous one:
$$[3 3 3 4 5 6]\to [433356] \to [533346] \to [633345].$$
Again, these exchanges are in the conditions of Proposition \ref{schwarz}.\smallskip

{\bf Case 3}. There is an increment in position $(r,r+1)$, assume $1<r<n-1$. We first apply the algorithm described in case 1 for entries $r,r-1,\ldots$, as long as the entries decrease, and afterwards we apply the algorithm of case 2 to entries $r+1,\ldots$, as long as the entries increase. We do not touch entry in position $n$, which has been changed. Here is an example with $r=3$, e $L_i=[1\ 1\ 2\ |\ 3\ 4\ 6\ 6]$ (we added a vertical line to mark $(r, r+1)$).
\begin{eqnarray*}
&& [112|3466] \to [116|3462] \to [126|3461]\\
&& [126|3461] \to [326|1461] \to [426|1361] \to [626|1341]
\end{eqnarray*}
(We repeated the row $[126|3461]$ to separate both types of exchanges.)
Notice that even if we had exchanged entry in position 1, it would still be smaller that entries in positions $r+1,\ldots,n-1$. 

In this case, as we can see, we can always have $t+1$ exchanges, but for simplicity, we will not perform the last one, in order to have just $t$ exchanges (obtaining $t+1$ rows) as in the other cases.
\medskip

In case $r=1$, we skip the first part, and just do the algorithm as in case 2. For instance, if $L_i=[2\ |\ 3\ 4\ 6\ 6]$, we would have
$$[2|3466] \to [3|2466] \to [4|2366] \to [6|2346].$$
In this case, we can only do $t$ exchanges. This also happens if some other cases: $r=n-1$ or $t=r-1$ or if there is an increment on position $(n-1,n)$.\medskip

Therefore, we can always do $t$ exchanges, obtaining $t+1$ matrices as before, all with different spectral radii. This proves the claim.\qed\medskip

We now describe the procedure for obtaining the final sequence of matrices. 
\begin{enumerate}
\item We start with a matrix with all rows equal to $L_i$---at the very beginning, $i=1$. This matrix has spectral radius equal to the row sum of $L_i$, $s_i$, since $(1,\ldots,1)$ is an eigenvector.
\item Look for the first row equal to $L_i$, replace it by $L_{i+1}$ and in this row do all possible exchanges, as described in the previous algorithm.
\item We repeat step 2 until row $L_i$ in position $n-1$ is replaced and all exchanges are done. After this we do all exchanges in row $L_i$ in position $n$. We now have a matrix with spectral radius strictly smaller than $s_{i+1}$.
\item We now replace all rows by $L_{i+1}$, obtaining a matrix with spectral radius $s_{i+1}$, and return to step 1, unless row $L_{i+1}$ is the last row in the sequence.
\end{enumerate}

It can be easily checked that all the matrices in this sequence have the form described in Proposition \ref{rows_perron}, and so we have the desired inequalities in the entries of the Perron vector.\medskip

We now calculate the final number of matrices in the sequence. We claim that, for $L_i\neq L_1$ each of the row matrices $L_i'$ obtained from $L_i$ by exchanging its entries provides $n$ matrices. Taking $1\leq r\leq n$, the argument is similar to the one presented before, with a little adjustment for $r=n$. For a row matrix $L_i$, let $L_i^{j\prime}$ be the last row obtained from $L_i$, after all exchanges are done, assuming $L_i$ occupies the row $j$ (we remark that the exchanges to be made depend on $j$).
\begin{itemize}
\item If $r\neq n$, consider the matrix in which rows $1\leq j\leq r-1$ are equal to $L_i^{j\prime}$, row $r$ is equal to $L_i'$ and the remaining rows are equal to $L_{i-1}$.
\item If $r=n$:
\begin{itemize}
\item If $L_i'\neq L_i$, consider the matrix in which all rows in positions $1 \leq j\leq n-1$ are equal to $L_{i+1}^{j\prime}$ and row $n$ is equal to $L_i'$;
\item If $L_i'=L_i$, consider the matrix in which all the rows are equal to $L_i$. 
\end{itemize}
\end{itemize}

Therefore, each row with $t$ increments provides $n(t+1)$ matrices to the sequence. \medskip

Before the final computation, we recall a formula that will be useful. For $m>2$, 
$$\sum_{t=1}^{m-2} (t+1)^2 = \sum_{t=2}^{m-1} t^2 = \frac{(m-1)m(2m-1)}{6}-1.$$

We must pay special attention to the two rows with 0 increments, the first and the last ones of the sequence. This first one will provide only one matrix, whereas the other will provide $n$, as all others. This accounts for the first two summands in the next expressions.\medskip

Case $k<n$. 
\begin{eqnarray*}
&& 1+n+2n\displaystyle\sum_{t=1}^{k-2} (t+1)^2 + n(k-1)(n-k) + nk(n-k+1) =\\
&& \quad = n\frac{(k-1)k(2k-1)}{3} + n(k-1)(k-2)(n-k) + \\
&& \quad \quad + nk(n-k+1) -n+1.
\end{eqnarray*}

Case $k\geq n$. Similarly, we get
\begin{eqnarray*}
&& 1+n+2n\displaystyle\sum_{t=1}^{n-2} (t+1)^2 + n^2(n(k-n)+1)\\
&& \quad = n\frac{(n-1)n(2n-1)}{3}+n^3(k-n)+n^2-n+1.
\end{eqnarray*}

This finishes the proof.\qed

\section{Final remarks}

The authors made some attempts in the way of proving that $|\Omega_n(A)|\leq |\Omega_{n+1}(A)|$. To do this one might try to establish that, for some $v_1,v_2\in A^{n\times 1}$ and $a\in A$, if $\rho(M)<\rho(M')$, then
$$\rho \left(\begin{bmatrix} M &  v_1\\ v_2^T & a\end{bmatrix}\right)
< 
\rho \left(\begin{bmatrix} M' &  v_1\\ v_2^T & a \end{bmatrix}\right).$$
It may be possible to find some bordering for which this is true, but it is not true in general. For instance,
$$\rho \left(\begin{bmatrix} 1 & 2\\ 1 & 1 \end{bmatrix}\right) = 2.414 < 
\rho \left(\begin{bmatrix} 2 & 1\\ 1 & 1 \end{bmatrix}\right) = 2.618,$$
and yet
$$\rho \left(\begin{bmatrix} 1 & 2 & 1\\ 1 & 1 & 3\\ 1 & 1 & 3 \end{bmatrix}\right) = 4.791 > 
\rho \left(\begin{bmatrix} 2 & 1 & 1\\ 1 & 1 & 3\\ 1 & 1 & 3\end{bmatrix}\right) = 4.732.$$

We could have deduced the inequalities about the spectral radii just using the theory of exchanges that we have presented.\medskip

Some computational experiments have suggested the following conjectures. Define $r_k:=\min \{|\Omega_2(A)|: |A|=k\}$.

\begin{conjecture}
We have
$$r_k=\frac{1}{2}(2k^3-k^2+k)= kT(k)+(k-1)T(k-1),$$
where $T(k)=k(k+1)/2$ is the $k$-th triangular number. This is sequence {\tt A081436} in the On-Line Encyclopedia of Integer Sequences (http://oeis.org/ visited in April 2011). Moreover, 
$r_k=|\Omega_2(A)|$ if $A$ is a geometric progression with $|A|=k$.
\end{conjecture}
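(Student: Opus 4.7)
Since the statement is a conjecture, my plan is to sketch a realistic attack and flag where the difficulty lies. The conjecture splits into (a) evaluating $|\Omega_2(A)|$ when $A$ is a geometric progression and showing it equals $\frac{k(2k^2-k+1)}{2}$, and (b) showing this is a lower bound for every $A$ with $|A|=k$. I would begin with (a), which is a finite calculation, and then treat (b), the main obstacle.

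\textbf{Component (a): geometric progressions.} Normalise $A$ to $A'=\{1,r,r^2,\ldots,r^{k-1}\}$ (this preserves $|\Omega_2|$). Using $\rho\bigl(\begin{smallmatrix}a&b\\c&d\end{smallmatrix}\bigr)=\tfrac{1}{2}\bigl((a+d)+\sqrt{(a-d)^2+4bc}\bigr)$, the spectral radius depends only on the unordered diagonal multiset $\{r^i,r^j\}$ (with $0\leq i\leq j\leq k-1$) and the off-diagonal product exponent $l$ (with $0\leq l\leq 2k-2$, where $bc=r^l$), giving at most $T(k)(2k-1)$ parameter triples. The identity $(r^i-r^j)^2+4r^{i+j}=(r^i+r^j)^2$ forces a three-way coincidence: for each $i<j$, the triples $(\{i,j\},i+j)$, $(\{i,i\},2j)$, $(\{j,j\},2i)$ share the spectral radius $r^i+r^j$. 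Subtracting the resulting $2\binom{k}{2}=k(k-1)$ redundant triples gives $T(k)(2k-1)-k(k-1)=\frac{k(2k^2-k+1)}{2}$, as required. To complete (a) I would verify no other coincidences occur: any triple $(\{a,b\},c)$ with spectral radius $\rho$ satisfies $c=(a-\rho)(b-\rho)$, so for fixed $\rho$ the admissible pairs $\{a,b\}$ form the solution set of a polynomial identity in $r$ that, for generic $r$ (or for fixed integer $r\geq 2$, via $\Q$-linear independence of the monomials $r^m$), forces the three cases listed.

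\textbf{Component (b): the lower bound.} For an arbitrary $A=\{a_1<\cdots<a_k\}$ the same parametrisation applies, and the universal three-way coincidence persists: setting $\rho=a_i+a_j$ gives $(a_i-\rho)(a_j-\rho)=a_ia_j$, $(a_i-\rho)^2=a_j^2$, $(a_j-\rho)^2=a_i^2$, so the triples $(\{a_i,a_j\},a_ia_j)$, $(\{a_i,a_i\},a_j^2)$, $(\{a_j,a_j\},a_i^2)$ all produce $a_i+a_j$, contributing exactly $k(k-1)$ redundant triples in every $A$. Since the total number of parameter triples is $T(k)\cdot|A\cdot A|\geq T(k)(2k-1)$, if these were the \emph{only} coincidences the bound would follow immediately. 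The main obstacle is controlling additional coincidences: each corresponds to a nontrivial algebraic identity among sums and products of elements of $A$, and I would try to trade such identities against growth of $|A\cdot A|$ beyond $2k-1$, arguing in the spirit of Erd\H os--Szemer\'edi that any extra coincidence forces enough multiplicative structure to enlarge $A\cdot A$ by a compensating amount. Turning this intuition into a rigorous quantitative statement, in particular a matching extremal characterisation of geometric progressions, appears to require genuinely new combinatorial-algebraic input and is where I expect the bulk of the difficulty to lie.
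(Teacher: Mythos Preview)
The paper offers no proof of this statement: it is labelled a conjecture and is presented purely as the outcome of computational experiments, so there is nothing to compare your argument against. Your framing---treating it as a sketch of attack rather than a proof, and separating the geometric-progression computation (a) from the extremality claim (b)---is appropriate and honest.

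Your counting in part (a) is correct: parametrising by the unordered diagonal pair $\{r^i,r^j\}$ and the off-diagonal product exponent $l$ gives $T(k)(2k-1)$ triples, the identity $(r^i-r^j)^2+4r^{i+j}=(r^i+r^j)^2$ produces exactly the three-way coincidence you describe, and $T(k)(2k-1)-k(k-1)=\frac{k(2k^2-k+1)}{2}$ is the conjectured value. However, your justification that \emph{no other} coincidences occur has a genuine error. You invoke ``$\Q$-linear independence of the monomials $r^m$'' for fixed integer $r\geq 2$; but for any integer $r$ these powers are all rational, hence certainly $\Q$-linearly dependent. The argument can plausibly be rescued for \emph{transcendental} $r$ (treating the equality of two spectral radii as an identity of algebraic functions in the variable $r$, with some care about the square roots), but this only handles generic ratios. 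The conjecture as stated claims the formula for \emph{every} geometric progression, so even a completed generic-$r$ argument would leave a residual case analysis for algebraic ratios.

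For part (b) you correctly isolate the universal three-way coincidence and correctly identify the obstacle: extra coincidences for special $A$ might eat into the count, and one would need to show they are compensated by growth of $|A\cdot A|$ beyond $2k-1$. Your proposed trade-off heuristic is natural but, as you say, is exactly where the difficulty lies; the paper does not go further than you do here.
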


\begin{conjecture} Let $B$ be a set of positive reals with $|B|=k-1$. Then $|\Omega_2(B\cup\{0\}|)| < r_k$. 
\end{conjecture}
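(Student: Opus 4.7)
The plan is to obtain a tight upper bound on $|\Omega_2(B \cup \{0\})|$ by a coincidence-counting argument, then compare with the conjectured value $r_k = \frac{1}{2}(2k^3 - k^2 + k)$ from the preceding conjecture. For $M \in (B \cup \{0\})^{2 \times 2}$, the spectral radius $\rho(M) = \tfrac{1}{2}(s + \sqrt{s^2 - 4q})$ depends only on $(s,q) = (\mathrm{tr}(M), \det(M))$, and two choices of $M$ give the same $\rho$ exactly when their pairs $(s,q)$ lie on the line $q = \rho s - \rho^2$. Hence $|\Omega_2(B \cup \{0\})|$ equals the number of equivalence classes of realizable $(s,q)$ pairs under this incidence, so the work reduces to enumerating candidate pairs and then accounting for all forced coincidences.

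For the enumeration, each unordered $\{a,d\} \subseteq B \cup \{0\}$ together with each value $bc \in \{0\} \cup B \cdot B$ yields one pair $(s,q)$, giving at most $\binom{k+1}{2}(1 + |B \cdot B|)$ candidates. For the coincidences, I would systematically list the structural ones: every $b \in B$ is realized as $\rho$ by the matrix with $(a,d) = (0,0)$ and $bc = b^2$ in addition to all triangular matrices whose largest diagonal entry is $b$; every sum $b_i + b_j$ with $b_i \neq b_j$ in $B$ is realized from three distinct matrix types, namely $(a,d) = (b_i, b_i)$ with $bc = b_j^2$, $(a,d) = (b_j, b_j)$ with $bc = b_i^2$, and $(a,d) = (b_i, b_j)$ with $bc = b_i b_j$. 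Subtracting the corresponding savings from the raw count yields the desired upper bound, which the final step would compare directly to $r_k$.

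The main obstacle will be verifying that the list of structural coincidences is both complete and large enough to force strict inequality with $r_k$. A preliminary calculation shows that for $B$ with maximal $|B \cdot B|$, say with elements algebraically independent over $\Q$, the savings from the two sources above amount to $\frac{(3k-2)(k-1)}{2}$, which is not by itself enough to push the raw bound strictly below $r_k$ once $k$ is sufficiently large. A successful proof would therefore need either to uncover finer coincidences from subtler algebraic identities linking distinct matrix types, or to invoke a Pl\"unnecke--Ruzsa structural argument ensuring that large $|B \cdot B|$ forces small $|B+B|$ and hence additional overlaps among the $(s,q)$ pairs. Whether such a combined analysis can be made uniform in $B$ is the crucial difficulty, and may indicate that the conjecture requires an extra hypothesis on the multiplicative structure of $B$.
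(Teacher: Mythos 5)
This statement is one of the paper's explicitly open conjectures, offered in the final remarks as ``suggested by computational experiments''; the paper contains no proof of it, so there is nothing to compare your argument against. Your proposal is also not a proof, and you say as much in your last paragraph: the coincidence savings you compute do not bring the raw count below $r_k$. That concession is correct, but you should push your own calculation one step further, because it does more than reveal a ``difficulty'' --- it shows the approach cannot possibly work and, assuming the paper's Conjecture 4.1, appears to refute the statement as written. For $B$ a set of $k-1$ algebraically independent positive reals, your upper bound is in fact an \emph{exact} count: $\rho=\frac{1}{2}\bigl(s+\sqrt{s^2-4q}\bigr)$ lies in $F=\Q(b_1,\dots,b_{k-1})$ only when $s^2-4q=(a-d)^2+4bc$ is a square in $F$, which happens exactly in the cases you list ($bc=0$; $a=d$ with $bc=b_i^2$; $\{a,d\}=\{b_i,b_j\}$ with $bc=b_ib_j$); for all other parameter choices $\sqrt{s^2-4q}\notin F$, and $\frac{1}{2}(s+\sqrt{D})=\frac{1}{2}(s'+\sqrt{D'})$ with $s,s'\in F$ forces $s=s'$ and $D=D'$, hence (by algebraic independence) the same diagonal multiset and the same off-diagonal product. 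So there are no ``finer coincidences'' to be found, and the Pl\"unnecke--Ruzsa suggestion is moot (besides being backwards: a generic $B$ has $|B\cdot B|$ and $|B+B|$ both maximal). The exact value is
$$|\Omega_2(B\cup\{0\})|=\frac{k(k+1)}{2}\left(\frac{k(k-1)}{2}+1\right)-\frac{(3k-2)(k-1)}{2},$$
which gives $4,18,55$ for $k=2,3,4$ (consistent with $r_k=7,24,58$) but $139$ for $k=5$ and $296$ for $k=6$, exceeding the conjectured $r_5=115$ and $r_6=201$.

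Concretely, then: if Conjecture 4.1 is correct (indeed, if \emph{any} $5$-element set of positive reals has at most $139$ spectral radii, which the conjectured value $115$ for geometric progressions would certainly give), a generically chosen $B$ with $|B|=4$ is a counterexample to the universally quantified statement ``for every $B$ with $|B|=k-1$.'' The honest conclusions to draw from your computation are therefore either (i) the conjecture must be reinterpreted as a statement about $\min_B|\Omega_2(B\cup\{0\})|$ versus $r_k$, i.e.\ that allowing a zero entry lets one do strictly better than the all-positive minimum, in which case the right strategy is not an upper bound valid for all $B$ but the construction of one specific highly structured $B$ (e.g.\ a geometric progression) with few spectral radii, or (ii) the conjecture as literally stated is false for $k\geq 5$. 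Either way, the strategy of bounding $|\Omega_2(B\cup\{0\})|$ uniformly over all $B$ is the wrong one, since the quantity being bounded genuinely exceeds $r_k$ for generic $B$.
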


\end{document}